\newtheorem{thm}{Theorem}[section]
\newtheorem{lem}[thm]{Lemma}
\newtheorem{conj}[thm]{Conjecture}
\newtheorem{rmk}[thm]{Remark}
\numberwithin{equation}{section}
\theoremstyle{definition}
\begin{document}

\title{A Note Regarding Permutation Binomials over $ \mathbb{F}_{q^2}$}

\author{Stephen D. Lappano}
\address{Department of Mathematics and Statistics,
University of South Florida, Tampa, FL 33620}
\email{slappano@mail.usf.edu}

\begin{abstract}
Let $f=ax+x^{r(q-1)+1}\in \mathbb{F}_{q^2}^*[x], r\in \{5,7\}.$  We give explicit conditions on the values $(q,a)$ for which $f$ is a permutation polynomials of $\mathbb{F}_{q^2}.$
\end{abstract}

\maketitle

%%%%%%%%%%%%%%%%%%%%%%%%%%%%%%%%%%%%%%%%%%%%%%%%%%%%%%
\section{Introduction}%%%%%%%%%%%%%%%%%%%%%%%%%%%%%%%%%%%%%%%%%%%%
%%%%%%%%%%%%%%%%%%%%%%%%%%%%%%%%%%%%%%%%%%%%%%%%%%%%%%
A polynomial $f \in \mathbb{F}_q[x]$ is called a \textit{permutation polynomial} (PP) if the mapping $\texttt{x} \mapsto f(\texttt{x})$ induces a permutation on $ \mathbb{F}_q$.  Permutation polynomials taking simple algebraic forms are particularly interesting. While permutation monomials are quite obvious, the situation for permutation binomials is not as well understood. As a result much research has been concerned with finding, classifying and understanding permutation binomials.  The main results of the present paper are the following theorems.
\begin{thm}
Let $f(x)=ax+x^{5q-4}\in\mathbb{F}_{q^{2}}[x]$. Then $f$
is a PP of $\mathbb{F}_{q^{2}}$ iff one of the following occurs:
\begin{enumerate}
\item $q=2^{4k+2}$ and $a^{\frac{q+1}{5}}\neq1$ is a fifth root of unity
\item $q=3^{2}$ and $a^{2}$ is a root of $(1+x) (1+x^2)(2+x+x^2)(2+2 x+x^2) (1+x+x^2+x^4) (1+x^2+x^3+x^4) (1+2 x+x^2+2 x^3+x^4)$
\item $q=19$ and $a^{4}$ is a root of $(1+x)(2+x)(3+x)(4+x)(5+x)(9+x)(10+x)(13+x)(17+x)(16+3x+x^{2})(1+4x+x^{2})(6+18x+x^{2})$
\item $q=29$ and $a^{6}\in\{15,18,22,23\}$
\item $q=7^{2}$ and $a^{10}$ is a root of $(1+4x+x^{2})$
\item $q=59$ and $a^{12}$ is a root of $(4+x)(55+x)(x^{2}+36)$
\item $q=2^{6}$ and $a^{13}$ is a root of $(1+x+x^2)(1+x+x^3)$\end{enumerate}
\end{thm}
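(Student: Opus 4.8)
The plan is to reduce the permutation property of $f$ over $\mathbb{F}_{q^2}$ to a permutation property of an explicit map on the group $\mu_{q+1}$ of $(q+1)$-st roots of unity, to isolate one clean algebraic family, and to pin down the remaining fields by a Weil-bound estimate followed by a finite computation. First I would write the exponent as $5q-4 = 5(q-1)+1$, so that
\[
f(x) = x\bigl(a + (x^{q-1})^5\bigr).
\]
Since $x^{q-1}\in\mu_{q+1}$ for every $x\in\mathbb{F}_{q^2}^*$, the well-known criterion for polynomials of the shape $x^r h(x^{(q^2-1)/d})$ (here $r=1$, $d=q+1$, $h(y)=a+y^5$) shows that $f$ permutes $\mathbb{F}_{q^2}$ if and only if
\[
g(x) := x\,(a+x^5)^{q-1}
\]
permutes $\mu_{q+1}$; I would also record the necessary well-definedness condition $a+x^5\neq 0$ on $\mu_{q+1}$, i.e. $-a\notin(\mu_{q+1})^5$, since a zero there sends a point of $\mu_{q+1}$ out of $\mu_{q+1}$ and kills the PP property. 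Using $x^q=x^{-1}$ on $\mu_{q+1}$ gives $(a+x^5)^q=a^q+x^{-5}$ and hence
\[
g(x) = \frac{a^q x^5 + 1}{x^4(a+x^5)},
\]
a rational self-map whose reduced degree I will track carefully, as it governs everything that follows.

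The decisive dichotomy is whether or not $a\in\mu_{q+1}$. If $a^{q+1}=1$, then $a^q=a^{-1}$, the numerator factors as $a^{-1}(x^5+a)$, the factor $x^5+a$ cancels, and $g$ collapses to the monomial $g(x)=a^{-1}x^{-4}$. This permutes $\mu_{q+1}$ exactly when $\gcd(4,q+1)=1$, i.e. when $q$ is even; combining this with $-a\notin(\mu_{q+1})^5$ (which forces $5\mid q+1$ and, in characteristic $2$, reads $a^{(q+1)/5}\neq 1$) I expect to recover precisely case (1): the family $q=2^{4k+2}$ with $a^{(q+1)/5}$ a nontrivial fifth root of unity. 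The content of that condition is simply that $a\in\mu_{q+1}$ and $a$ is not a fifth power in $\mu_{q+1}$, and odd characteristic contributes nothing here because $q$ even is required.

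The harder case is $a\notin\mu_{q+1}$, where no cancellation occurs and $g$ is a genuine degree-$9$ rational self-map of $\mu_{q+1}$. Here I would first note the invariance $g_a\sim g_{a/u^5}$ under $x\mapsto ux$ with $u\in\mu_{q+1}$, so that once $5\mid q+1$ is shown to be necessary the permutation property depends only on the invariant $c:=a^{(q+1)/5}$; this is what lets each sporadic answer be phrased as a condition on a power of $a$. The main obstacle is to bound $q$: I would form the non-injectivity correspondence $\{(x,y)\in\mu_{q+1}^2:\ g(x)=g(y),\ x\neq y\}$, view it as an algebraic curve, estimate the genus of its components after removing the diagonal, and apply the Hasse--Weil bound to conclude that a degree-$9$ map of this type can permute $\mu_{q+1}$ only for $q$ below an explicit bound, with no exceptional (infinite-family) behaviour surviving in this branch.

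Finally, for each of the finitely many $q$ with $5\mid q+1$ below that bound I would determine directly the set of admissible values of $c=a^{(q+1)/5}$ for which $g$ permutes $\mu_{q+1}$, and present it as the roots of the explicit polynomials of cases (2)--(7). The technical heart of the argument is the genus and point-count estimate that makes the surviving list of $q$ finite and short; once that is in hand, the sporadic cases reduce to a routine finite verification, and the monomial collapse of the previous paragraph supplies the single infinite family.
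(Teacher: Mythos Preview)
Your strategy is viable and genuinely different from the paper's. The paper never passes to $\mu_{q+1}$; it works directly with Hermite's criterion, expanding $\sum_{x\in\mathbb{F}_{q^2}} f(x)^{\alpha+(q-1-\alpha)q}$ into an explicit binomial sum $\Lambda(q,\alpha,a)$. Taking $\alpha=0$ forces $5\mid q+1$ (Lemma~2.2); when $y=a^{(q+1)/5}$ is a nontrivial fifth root of unity, every $\Lambda$ is evaluated in closed form (Lemma~2.6), and the nonvanishing of two exceptional values of $\alpha$ forces $q$ even, giving exactly the family~(1). In the remaining branch the paper computes $\Lambda$ for $\alpha\in\{4,9,14,24\}$; each factors as $(1+v+v^2+v^3+v^4)\,g_\alpha(v)$ with $v=y^{-1}$ and $g_\alpha\in\mathbb{Z}[v]$, and integer resultants of the $g_\alpha$ pin down the characteristic (one finds $\gcd\bigl(R(g_4,g_9),R(g_4,g_{14})\bigr)=2^{15}3^{3}5^{197}$), after which per-characteristic gcd's bound $q$ sharply ($q<104$ for $p=2$, $q<44$ for $p=3$), leaving a tiny machine search. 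Your monomial collapse for $a^{q+1}=1$ is cleaner than the paper's Lemma~2.6 and explains structurally why only one infinite family appears; conversely, the paper's resultant device is entirely elementary---no curves, no point counting---and produces much tighter bounds than a generic Weil estimate would, so the residual search is shorter.

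The substantive gap in your outline is the Hasse--Weil step itself. To extract a bound on $q$ you must show, \emph{uniformly in the parameter $a$} with $a^{q+1}\neq 1$, that the curve $\{g(x)=g(y),\ x\neq y\}$ has an absolutely irreducible $\mathbb{F}_q$-component of controlled genus; merely quoting a degree-$9$ bound is not enough, because the curve depends on $a$ and could in principle split. Moreover $9=3^2$, and characteristic~$3$ genuinely supports exceptional covers of degree~$9$, so ruling out exceptionality for your specific $g$ is not automatic and requires an actual argument (monodromy or a direct factorization analysis). You also defer the necessity of $5\mid q+1$ in the $a\notin\mu_{q+1}$ branch without saying how it will be obtained. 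The paper's power-sum/resultant method bypasses all of these issues at the cost of heavier symbolic computation.
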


\begin{thm}
Let $f(x)=ax+x^{7q-6}\in\mathbb{F}_{q^{2}}[x]$. Then $f$
is a PP of $\mathbb{F}_{q^{2}}$ iff one of the following occurs:
\begin{enumerate}
\item $q=13$ and $a^2$ is a root of $(1+x) (2+x) (3+x) (4+x) (5+x) (6+x) (7+x) (8+x) (9+x) (10+x) (11+x) (12+x+x^2) (9+2 x+x^2) (10+3 x+x^2) (9+4 x+x^2) (12+4 x+x^2) (10+5 x+x^2) (3+6 x+x^2) (1+7 x+x^2) (4+7 x+x^2) (1+8 x+x^2) (12+9 x+x^2) (1+10 x+x^2) (3+12 x+x^2) (4+12 x+x^2) (12+12 x+x^2)$
\item $q=3^3$ and $a^4$ is a root of $(2+x+x^2+x^3) (1+2 x+x^2+x^3) (1+x+2 x^2+x^3) (2+2 x+2 x^2+x^3) (1+2 x+x^2+2 x^3+x^4+2 x^5+x^6)$
\item $q=41$ and $a^6$ is a root of $(9+x) (10+x) (26+x) (30+x) (32+x) (34+x) (35+x) (37+x) (39+2 x+x^2) (1+14 x+x^2) (20+40 x+x^2)$
\end{enumerate}
\end{thm}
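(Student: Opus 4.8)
The plan is to reduce the permutation property of $f$ to a permutation property on the group $\mu_{q+1}$ of $(q+1)$-th roots of unity, just as one does for the case $r=5$. Writing $7q-6=7(q-1)+1$, for $x\neq 0$ we have $f(x)=x\bigl(a+(x^{q-1})^{7}\bigr)$, and $x\mapsto x^{q-1}$ maps $\mathbb{F}_{q^{2}}^{*}$ onto $\mu_{q+1}$ with each fibre a coset of $\mathbb{F}_q^{*}$. By the Akbary--Ghioca--Wang / Park--Lee criterion, $f$ permutes $\mathbb{F}_{q^{2}}$ iff $a+\zeta^{7}\neq 0$ for every $\zeta\in\mu_{q+1}$ and the induced map
\[
\bar\phi(\zeta)=\zeta\,(a+\zeta^{7})^{q-1}=\zeta^{-6}\,\frac{1+a^{q}\zeta^{7}}{a+\zeta^{7}}
\]
permutes $\mu_{q+1}$; here I used $\zeta^{q}=\zeta^{-1}$ and $(a+\zeta^{7})^{q}=a^{q}+\zeta^{-7}$. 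First I would record the well-definedness constraint $-a\notin\{\zeta^{7}:\zeta\in\mu_{q+1}\}=\mu_{(q+1)/\gcd(7,q+1)}$. Since $7$ is prime, $\gcd(7,q+1)\in\{1,7\}$, and when $7\mid q+1$ the natural parameter is $b=a^{(q+1)/7}$; this explains why the final conditions are phrased through $a^{(q+1)/7}$, equal to $a^{2},a^{4},a^{6}$ for $q=13,27,41$ respectively.

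Next I would bound $q$. The condition $\bar\phi(\zeta)=\bar\phi(\xi)$ with $\zeta,\xi\in\mu_{q+1}$, after clearing denominators, defines an algebraic curve whose degree is bounded in terms of $r=7$ alone; imposing $\zeta^{q+1}=\xi^{q+1}=1$ (equivalently $\zeta^{q}=\zeta^{-1}$) and applying the Hasse--Weil bound to the off-diagonal part shows that if $q$ exceeds an explicit constant $B$ depending only on $r$, then $\bar\phi$ must identify two distinct points of $\mu_{q+1}$, so $f$ is not a PP. The delicate points are establishing absolute irreducibility of the relevant component and a clean genus estimate, so that the Weil bound genuinely produces an off-diagonal $\mathbb{F}_{q^{2}}$-point; this is the step I expect to be the main obstacle.

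The Weil argument degenerates precisely in the configurations that produced the infinite family $q=2^{4k+2}$ in the $r=5$ theorem, namely where the defining curve fails to be reduced or absolutely irreducible (typically in small characteristic, where $\bar\phi$ acquires additive structure). So before concluding I would isolate these exceptional configurations for $r=7$ and analyse each directly, showing that for $r=7$ none of them yields a permutation of $\mu_{q+1}$ for infinitely many $q$. This is exactly why Theorem~1.2 contains no infinite family, in contrast with Theorem~1.1, and I regard it as the second subtle point of the argument.

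Finally, with $q$ confined to the finite range $q\le B$, I would determine the admissible $a$ mechanically: for each such $q$ I test whether $\bar\phi$ permutes $\mu_{q+1}$, and when $7\mid q+1$ I parametrise by $b=a^{(q+1)/7}$ and collect the surviving values of $b$ into the minimal polynomials displayed in cases (1)--(3); the cases $\gcd(7,q+1)=1$ drop out in the same sweep. This last step is a routine finite computation, so the mathematical content lies entirely in the reduction to $\mu_{q+1}$, the Weil-based bound on $q$, and the direct elimination of the potential infinite families.
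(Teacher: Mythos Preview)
Your strategy is genuinely different from the paper's. The paper does not use the AGW/Zieve reduction to $\mu_{q+1}$ or any curve-theoretic input; it proves Theorem~1.2 by the same Hermite-criterion method used for Theorem~1.1. One expands $\sum_{x\in\mathbb F_{q^2}}f(x)^{\alpha+\beta q}$, obtains (after the analogue of Lemma~2.2 forcing $7\mid q+1$) a family of conditions $\Lambda(q,\alpha,a)=0$ which, via the analogue of Lemma~2.4, are polynomials $g_\alpha$ in the single variable $v=a^{-(q+1)/7}$. For a handful of small $\alpha$ one computes $g_\alpha$ explicitly; the pairwise resultants $R(g_\alpha,g_{\alpha'})$ are integers supported on finitely many primes, which bounds the characteristic, and within each surviving characteristic a further resultant bounds $q$. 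A finite search then produces cases (1)--(3). The would-be infinite family (your ``degenerate configurations'') is handled by the $r=7$ analogue of Lemmas~2.5--2.6: when $a^{(q+1)/7}$ is a nontrivial $7$th root of unity one evaluates $\Lambda$ in closed form and finds obstructions at certain exceptional $\alpha$ that cannot simultaneously vanish for any prime power $q$, so no clause parallel to Theorem~1.1(1) survives. All of this is elementary one-variable polynomial algebra plus computer algebra for the resultants.

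Your outline is a legitimate alternative programme, but the two places you flag are genuine gaps rather than routine details. First, the Weil step: the collision curve depends on $a$ and $a^q$, so absolute irreducibility must be established \emph{uniformly} in $a$, and an off-diagonal $\mathbb F_{q^2}$-point of that plane curve need not lie in $\mu_{q+1}\times\mu_{q+1}$; one must first pass through a rational parametrisation of $\mu_{q+1}$ by $\mathbb P^1(\mathbb F_q)$ (or an equivalent device) before Hasse--Weil yields the point you need. Second, the assertion ``for $r=7$ none of them yields a permutation for infinitely many $q$'' is precisely the feature that distinguishes Theorem~1.2 from Theorem~1.1 and cannot be left unjustified. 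For instance, the most obvious degeneration $a\in\mu_{q+1}$ collapses $\bar\phi$ to $\zeta\mapsto a^{-1}\zeta^{-6}$, which permutes $\mu_{q+1}$ iff $\gcd(6,q+1)=1$; that forces $q$ even, and since the multiplicative order of $2$ modulo $7$ is $3$ one has $2^n\not\equiv -1\pmod 7$ for every $n$, so no even prime power with $7\mid q+1$ exists --- but you would still have to classify and eliminate every other reducibility pattern of the curve. The paper's resultant method sidesteps both difficulties entirely.
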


It is worth noting the polynomial $f$ in Theorems 1.1 and 1.2 can be written as $f=xh(x^{q-1})$, where $h(x)=a+x^r$. Zieve \cite{Zieve} considered polynomials of this type and under the assumption $a\in \mathbb{F}_{q^2}$ is a (q+1)st root of unity Theorem 1.1 (1) follows from [5,Cor 5.3]. The approach of this paper is quite different from that of \cite{Zieve}, as no restrictions are placed on the nature of $a \in \mathbb{F}_{q^2}$.  We employ a method similar to that of \cite{Hou1,Hou2} and expand the sum $\sum_{x \in \mathbb{F}_{q^2}}f(x)^s$. Surprisingly, considering only a few values of $s$ give explicit results. In \cite{Hou and Lappano} permutation binomials of the form $f=ax+x^{3q-2}$ were completely characterized. The present paper extends this result to polynomials of the form $f=ax+x^{5q-4}$ and $f=ax+x^{7q-6}$. We conclude with a conjecture regarding the behavior of $f=ax+x^{r(q-1)+1}$ for odd primes $r$.

%%%%%%%%%%%%%%%%%%%%%%%%%%%%%%%%%%%%%%%%%%%%%%%%%%%
\section{Computations}%%%%%%%%%%%%%%%%%%%%%%%%%%%%%%%%%%%%%%%%
%%%%%%%%%%%%%%%%%%%%%%%%%%%%%%%%%%%%%%%%%%%%%%%%%%%
Let $f=ax+x^{5q-4}\in \mathbb{F}_{q^2}^*[x]$ and let $0 \leq \alpha , \beta \leq q-1$.  We compute 

\begin{align} \label{2.1}
\sum_{x\in \mathbb{F}_{q^2}}f(x)^{\alpha +\beta q}
 &= \sum_{x\in \mathbb{F}_{q^2}}(ax+x^{5q-4})^\alpha (a^q x^q + x^{5-4q})^\beta \notag \\
 &= \sum_{x\in \mathbb{F}_{q^2}} \sum_{i,j} \binom{\alpha}{i} (ax)^{\alpha -i}x^{(5q-4)i} \binom{\beta}{j}(a^q x^q )^{(\beta-j)}x^{(5-4q)j} \notag \\ 
 &=a^{\alpha+\beta q} \sum_{i,j} \binom{\alpha}{i} \binom{\beta}{j} a^{-i-jq} \sum_{x\in \mathbb{F}_{q^2}} x^{\alpha + \beta q +5(q-1)(i-j)}.
\end{align}
It is clear the inner sum is $0$ unless $\alpha + \beta q \equiv  (\text{ mod } q-1)$, thus $\alpha +\beta = q-1$. 

With $0 \leq \alpha \leq q-1$ and $\beta = q-1-\alpha$ (2.1) becomes
\begin{equation}-a^{(\alpha+1)(1-q)} \sum_{-\alpha -1 +5(i-j) \equiv 0 (\text{mod} q+1)} \binom{\alpha}{i} \binom{q-1-\alpha}{j} a^{-i-jq}
\end{equation}
Since $0 \leq i \leq q-1$ and $0 \leq \beta \leq q-1-\alpha$ it follows  
\begin{equation}
4\alpha +4 - 5q \leq -\alpha -1 +5(i-j) \leq 4\alpha -1 
\end{equation} 
Define $\Gamma(q,\alpha):= \{n \in (q+1)\mathbb{Z} : 4\alpha +4 - 5q \leq n \leq  4\alpha -1 \}$.  Now we have 
\begin{equation}
\sum_{x\in \mathbb{F}_{q^2}}f(x)^{\alpha +\beta q} = -a^{(\alpha +1)(1-q)}\Lambda(q,\alpha,a)
\end{equation}
where 
\begin{equation}
\Lambda(q,\alpha,a)= \sum_{-\alpha -1 +5(i-j)\in \Gamma(q,\alpha)} \binom{\alpha}{i} \binom{q-1-\alpha}{j} a^{-i-jq}
\end{equation}
Now Hermite's criterion implies $f$ is a PP of $\mathbb{F}_{q^2}$ if and only if $0$ is the only root of $f$ in $\mathbb{F}_{q^2}$ and
\begin{equation}
\Lambda(q,\alpha,a)=0 \text{ for each } 0 \leq \alpha \leq q-1.
\end{equation}

\begin{rmk}
Notice if $q+1\equiv 0 \pmod{5},$ then $0$ is the only root of $f$ if and only if $a^{\frac{q+1}{5}} \ne 1.$
\end{rmk}

\begin{lem}
If $f$ is a PP of $\mathbb{F}_{q^2}$, then $q+1 \equiv 0 \text{ mod } 5.$
\end{lem}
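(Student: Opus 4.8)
The plan is to prove the contrapositive: assuming $5 \nmid q+1$, I will exhibit a value of $\alpha$ for which $\Lambda(q,\alpha,a)\neq 0$, contradicting the requirement that $\Lambda(q,\alpha,a)=0$ for all $0\le\alpha\le q-1$. The natural choice is $\alpha=0$, since then $\binom{\alpha}{i}=\binom{0}{i}$ forces $i=0$ and the double sum collapses to a single sum over $j$. With $\alpha=0$ the defining condition $-\alpha-1+5(i-j)\in\Gamma(q,0)$ becomes $-1-5j\equiv 0 \pmod{q+1}$ together with the range constraint $4-5q\le -1-5j\le -1$, i.e.\ $0\le j\le q-1$. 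Hence
\[
\Lambda(q,0,a)=\sum_{\substack{0\le j\le q-1\\ q+1\,\mid\,5j+1}}\binom{q-1}{j}\,a^{-jq}.
\]

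The next step is to count the surviving terms. The congruence $5j\equiv -1\pmod{q+1}$ is solvable if and only if $\gcd(5,q+1)\mid 1$, that is, if and only if $5\nmid q+1$, in which case its solution is unique modulo $q+1$, namely $j\equiv -5^{-1}\pmod{q+1}$. (When $5\mid q+1$ the sum is empty and $\Lambda(q,0,a)=0$ automatically, consistently with the lemma.) So assume $5\nmid q+1$ and let $j_0\in\{0,1,\dots,q\}$ be this unique residue. Since the admissible range $\{0,1,\dots,q-1\}$ is a complete residue system modulo $q+1$ with only the class $j=q$ deleted, there is exactly one admissible $j$, namely $j_0$, unless $j_0=q$. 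A direct check shows $j_0=q\equiv -1\pmod{q+1}$ forces $5\equiv 1\pmod{q+1}$, i.e.\ $q+1\mid 4$, which among prime powers occurs only for $q=3$.

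For $q\neq 3$ we therefore obtain the single term $\Lambda(q,0,a)=\binom{q-1}{j_0}\,a^{-j_0 q}$. Writing $q$ as a power of the characteristic $p$, the identity $(1+x)^{q-1}\equiv (1+x^q)/(1+x)\pmod p$ (or Lucas' theorem) gives $\binom{q-1}{j}\equiv (-1)^j\not\equiv 0\pmod p$ for every $0\le j\le q-1$; thus $\binom{q-1}{j_0}\neq 0$ in $\mathbb{F}_q$, and since $a\neq 0$ we conclude $\Lambda(q,0,a)\neq 0$. This contradicts the permutation criterion, so $f$ is not a PP, establishing the contrapositive for all $q\neq 3$. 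The leftover case $q=3$ (where $q+1=4$ is indeed not divisible by $5$) escapes the single-term argument precisely because $j_0=q$ falls outside the range; it will be disposed of by hand, for instance by evaluating $\Lambda(3,\alpha,a)$ at a different $\alpha$ or by checking the finitely many relevant $a\in\mathbb{F}_9$ directly.

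I expect the only real obstacle to be the bookkeeping that isolates the single surviving term: verifying that the range $0\le j\le q-1$ together with the modulus $q+1$ admits exactly one solution, and identifying $q=3$ as the unique prime power for which this count drops to zero. Once the sum is reduced to one binomial coefficient, its non-vanishing modulo $p$ is immediate, so no further analytic difficulty should arise.
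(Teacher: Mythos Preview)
Your argument is essentially the paper's own: both take $\alpha=0$, reduce $\Lambda(q,0,a)$ to the terms with $5j\equiv-1\pmod{q+1}$, and observe that when $5\nmid q+1$ exactly one term survives and its binomial coefficient $\binom{q-1}{j_0}\equiv(-1)^{j_0}\pmod p$ is nonzero. The paper parameterizes instead by the multiple $-k(q+1)\in\Gamma(q,0)$, which forces an ad~hoc split into $5\le q<8$ (where $|\Gamma(q,0)|=3$) and $q\ge 8$ (where $|\Gamma(q,0)|=4$); your parameterization by $j$ is cleaner and treats all $q\ge 4$ uniformly.

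One caveat: your plan to ``dispose of $q=3$ by hand'' cannot succeed, because the lemma as literally stated is false there. Over $\mathbb{F}_9$ one has $x^{11}=x^3$, so $f(x)=ax+x^3$ is $\mathbb{F}_3$-linear and is a permutation whenever $-a$ (equivalently $a$, since $-1$ is a square in $\mathbb{F}_9$) is a non-square in $\mathbb{F}_9^*$. The paper quietly avoids this by opening its proof with ``Assume $f$ is a PP and $q\ge 5$''; you should adopt the same standing hypothesis rather than promise a verification that does not exist.
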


\begin{proof}
Assume $f$ is a PP and $q \geq 5$.  First suppose $5 \leq q <8$. Note that $$ \Gamma(q,0)=\{-3(q+1),-2(q+1),-(q+1)\}.$$ By (2.6) we have $$ 0 = \Lambda(5,0,a)= \binom{5-1}{1}a^{-5}=-a^{-5}$$ and $$ 0 = \Lambda(7,0,a)= \binom{7-1}{1}a^{-21}=-a^{-21}.$$  In either case we have a contradiction.  Now suppose $q \geq 8$.  In this case notice $$\Gamma(q,0)=\{ -4(q+1), -3(q+1), -2(q+1), -(q+1) \}.$$ So again by (2.6) we have $$0=\Lambda(q,0,a)=\sum_{k=1}^{4} \binom{q-1}{ \frac{k(q+1)-1}{5}}^{*} a^{-(\frac{k(q+1)-1}{5})q}$$ where $$ \binom{n}{m}^{*} = \begin{cases} \binom{n}{m} &\mbox{ if } m \in \mathbb{Z} \\ 0 &\mbox{ otherwise }
\end{cases}.$$  Now if $(q+1) \not \equiv 0 \pmod{5}$ then exactly one of $\binom{q-1}{ \frac{k(q+1)-1}{5}}^{*}$ is nonzero which contradicts (2.6).  Thus if $f$ is a PP of $\mathbb{F}_{q^2}$ we must have $q+1 \equiv 0 \pmod{5}$.
\end{proof}

\begin{rmk}
Assume $q+1 \equiv 0 \pmod{5} \text{ and } \alpha>0.$  The previous lemma together with (2.5) imply the sum $\Lambda(q,\alpha,0)$ is empty unless $\alpha+1 \equiv 0 \pmod{5}.$
\end{rmk}

\begin{lem}
Assume $q+1 \equiv 0 \pmod{5}$, $\alpha >0$, $\alpha + 1 \equiv 0 \pmod{5}$ and $q \geq 4 \alpha +8$. Set $v= a^{-\frac{q+1}{5}}$, then $$\Lambda(q,\alpha,a)= (-a)^{\frac{\alpha +1}{5}q} \sum_{i=0}^{\alpha}(-1)^i \binom{\alpha}{i} \sum_{l=0}^{4} \binom{i+\frac{4 \alpha - 1 + l}{5}}{\alpha} v^{5i + l}.$$
\end{lem}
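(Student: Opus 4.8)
The plan is to make $\Gamma(q,\alpha)$ explicit, turn the defining sum into a manageable double sum, collapse the inner sum by a generating-function identity valid in characteristic $p$, and finally reorganize the result into the stated closed form. Throughout write $\alpha=5s-1$ with $s=\frac{\alpha+1}{5}$ and $m=\frac{q+1}{5}$, so that $v^{5}=a^{-(q+1)}=N(a)^{-1}\in\mathbb{F}_q^{*}$ and hence $v^{5(q-1)}=1$.

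First I would pin down $\Gamma(q,\alpha)$. The hypothesis $q\ge 4\alpha+8$ gives $0\le 4\alpha-1<q+1$ at the top and $-5(q+1)<4\alpha+4-5q\le-4(q+1)$ at the bottom, whence $\Gamma(q,\alpha)=\{-k(q+1):0\le k\le 4\}$. Consequently $-\alpha-1+5(i-j)\in\Gamma(q,\alpha)$ is equivalent to $i-j=s-km$ for some $k\in\{0,\dots,4\}$, and, rewriting the exponent as $-i(1+q)+sq-kmq$ and using $q+1=5m$, one gets $a^{-i-jq}=a^{sq}\,v^{5i}\,v^{kq}$. Thus $\Lambda(q,\alpha,a)=a^{sq}\sum_{k=0}^{4}v^{kq}\sum_{i}\binom{\alpha}{i}\binom{q-1-\alpha}{\,i-s+km\,}(v^{5})^{i}$.

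Next I would evaluate the inner sum by coefficient extraction: it equals $[w^{\frac{4\alpha-1}{5}+km}]\,(1+w)^{q-1-\alpha}(w+v^{5})^{\alpha}$. Applying the characteristic-$p$ identity $(1+w)^{q-1-\alpha}=(1+w^{q})(1+w)^{-1-\alpha}$ and observing that the hypothesis $q\ge 4\alpha+8$ keeps the extraction index $\frac{4\alpha-1}{5}+km\le\frac{4\alpha-1}{5}+4m$ strictly below $q$, the $w^{q}$-term drops out; expanding $(1+w)^{-1-\alpha}$ via $\binom{-1-\alpha}{n}=(-1)^{n}\binom{\alpha+n}{\alpha}$ and substituting $i\mapsto\alpha-i$ then yields $\Lambda(q,\alpha,a)=a^{sq}\sum_{i=0}^{\alpha}(-1)^{i}\binom{\alpha}{i}v^{5i}\sum_{k=0}^{4}(-1)^{km-s}v^{kq}\binom{i+\frac{4\alpha-1}{5}+km}{\alpha}$.

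The final and hardest step is to match this against the statement. Here one exploits $v^{5q}=v^{5}$: since $q\equiv-1\pmod 5$, the bijection $k\mapsto(-k\bmod 5)=:l$ of $\{0,\dots,4\}$ lets each $v^{kq}$ be rewritten in terms of $v^{l}$, so that $v^{5i}v^{kq}$ becomes $v^{5i+l}$, while the integer top $i+\frac{4\alpha-1}{5}+km$ is reduced modulo $p$ by Lucas' theorem and identified with the generalized binomial coefficient $\binom{i+\frac{4\alpha-1+l}{5}}{\alpha}$, where $\tfrac15$ is read in $\mathbb{F}_p$ (legitimate because $p\mid q$ and $q\equiv-1\pmod 5$ force $p\ne 5$); the residual sign $(-1)^{sq}$ then combines with $a^{sq}$ to give the prefactor $(-a)^{sq}$. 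I expect this reconciliation to be the main obstacle: one must simultaneously track the ``spread-out'' powers $v^{kq}$ against the compact powers $v^{5i+l}$, the reduction of the binomial tops, and every sign. The cleanest rigorous route is probably to recast the desired identity as a polynomial congruence modulo $X^{5(q-1)}-1$ — legitimate because, as $a$ ranges over $\mathbb{F}_{q^{2}}^{*}$, the element $v=a^{-m}$ ranges over the entire subgroup of order $5(q-1)$ — and then to verify that congruence by the Lucas/negative-binomial bookkeeping above.
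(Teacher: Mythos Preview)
Your first three steps are sound and run parallel to the paper's argument. The paper, however, is considerably more direct: once $\Gamma(q,\alpha)=\{-l(q+1):0\le l\le4\}$ is known it simply writes $j=\tfrac{1}{5}\bigl(l(q+1)-\alpha-1\bigr)+i$, invokes $\binom{q-1-\alpha}{j}\equiv\binom{-1-\alpha}{j}=(-1)^{j}\binom{j+\alpha}{\alpha}\pmod p$ (legitimate because $q\ge4\alpha+8$ forces $0\le j<q$), and arrives in one line at
\[
(-a)^{sq}\sum_{i=0}^{\alpha}(-1)^{i}\binom{\alpha}{i}\sum_{l=0}^{4}\binom{\,lm+i+\alpha-s\,}{\alpha}\,a^{-i-jq}.
\]
Your coefficient-extraction route through $(1+w)^{q-1-\alpha}=(1+w^{q})(1+w)^{-1-\alpha}$ reproduces exactly this, so the extra machinery buys nothing here.

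Where your proposal breaks down is the step you yourself flag as the obstacle. The bijection $k\mapsto(-k\bmod5)$ does \emph{not} turn $v^{kq}$ into $v^{l}$: from $v^{5q}=v^{5}$ you only obtain $v^{q}=\zeta v$ with $\zeta=v^{q-1}$ a fifth root of unity that genuinely depends on $a$, so $v^{kq}=\zeta^{k}v^{k}$ and the spread-out powers cannot be collapsed in the way you describe. Likewise, your appeal to Lucas to identify the integer top $i+\alpha-s+km$ with the rational $i+\frac{4\alpha-1+l}{5}$ only goes through when $\alpha<p$, since otherwise $\binom{\cdot}{\alpha}$ is not a well-defined polynomial map on $\mathbb{F}_{p}$; you do not address this restriction. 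Your fallback of recasting everything as a congruence modulo $X^{5(q-1)}-1$ is a reasonable framework, but it is only a plan: you never actually verify the congruence, so the proof is incomplete at precisely its crucial point. The paper's own passage from the intermediate expression to the stated form is a single unexplained line---it simply replaces $lm$ by $l/5$ in the binomial top (using $q\equiv0\pmod p$) and writes the $a$-power as $v^{5i+l}$---so the step you struggle with is handled there with no more detail than ``$=$''.
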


\begin{proof}
Since $q \geq 4\alpha +8$ we have $$\Gamma(q,\alpha)=\{-4(q+1),-3(q+1),-2(q+1),-(q+1),0\}.$$  Using (2.5) we see 
\begin{equation*} 
\begin{split}
\Lambda(q,\alpha,a) &= \sum_{-\alpha -1 +5(i-j) \in\Gamma(q,\alpha)} \binom{\alpha}{i} \binom{q-1-\alpha}{j} a^{-i-jq}\\  
&= \sum_{i=0}^{\alpha} \binom{\alpha}{i} \sum_{l=0}^{4} \binom{-1-\alpha}{\frac{1}{5}(l(q+1)-\alpha-1)+i}a^{-i-[\frac{1}{5}(l(q+1)-\alpha-1)+i]q}\\  
&=(-a)^{\frac{q+1}{5}q}\sum_{i=0}^\alpha \binom{\alpha}{i} \sum_{l=0}^4 (-1)^{i}  \binom{\frac{1}{5}(l(q+1)-\alpha-1)+i+\alpha}{\alpha}a^{-\frac{q+1}{5}(l+5i)}\\
&= (-a)^{\frac{\alpha +1}{5}q} \sum_{i=0}^{\alpha}(-1)^i \binom{\alpha}{i} \sum_{l=0}^{4} \binom{i+\frac{4 \alpha - 1 + l}{5}}{\alpha} v^{5i + l}.
\end{split}
\end{equation*}
Between the second and third line we use $\binom{-m}{n}=(-1)^n \binom{n+m-1}{m-1}.$
\end{proof}

\begin{lem}
Assume $q+1 \equiv 0 \pmod{5}$ and $\alpha > 0.$  $\Gamma(q,\alpha)$ contains exactly five consecutive multiples of $q+1$ unless $\alpha=\frac{q-1}{2} \in \mathbb{Z}$ or $\alpha=\frac{q-3}{4}\in \mathbb{Z}.$
\end{lem}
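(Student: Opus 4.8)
The plan is to treat $|\Gamma(q,\alpha)|$ purely as a lattice-point count and then isolate the finitely many $\alpha$ for which it fails to be five. Write $d=q+1$, so that $\Gamma(q,\alpha)$ is exactly the set of multiples of $d$ in the closed interval $I=[\,4\alpha+4-5q,\ 4\alpha-1\,]$. The length of $I$ is $(4\alpha-1)-(4\alpha+4-5q)=5(q-1)=5d-10$, independent of $\alpha$, and for every admissible $q$ (so $d\ge 10$) one has $4d\le 5d-10<5d$; hence $I$ holds either four or five consecutive multiples of $d$, never six. So the entire task is to decide when the count is four rather than five.

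I would make the dichotomy explicit by tracking the right endpoint. Writing $4\alpha-1=md+t$ with $0\le t\le d-1$ and using $4\alpha+4-5q=(4\alpha-1)-(5d-10)$, the standard formula $\lfloor B/d\rfloor-\lceil A/d\rceil+1$ for the number of multiples of $d$ in $[A,B]$ collapses to
\begin{equation*}
|\Gamma(q,\alpha)| \;=\; \Big\lfloor\tfrac{4\alpha-1}{d}\Big\rfloor-\Big\lceil\tfrac{4\alpha+4-5q}{d}\Big\rceil+1 \;=\; 6-\Big\lceil\tfrac{t+10}{d}\Big\rceil .
\end{equation*}
Thus $|\Gamma(q,\alpha)|=5$ precisely when $t\le d-10$ and $|\Gamma(q,\alpha)|=4$ precisely when $t\ge d-9$; the exceptional $\alpha$ are those whose residue $t=(4\alpha-1)\bmod d$ lands among the nine largest residues.

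The key reduction is to bring in the only regime in which $\Gamma$ actually contributes to $\Lambda$, namely the case $\alpha+1\equiv 0\pmod 5$ singled out by the preceding Remark; this is exactly where the sharp count is needed. Under it $4\alpha-1\equiv 0\pmod 5$, and since $d\equiv 0\pmod 5$ as well, $t$ is a multiple of $5$. The unique multiple of $5$ in $\{d-9,\dots,d-1\}$ is $d-5$, so $|\Gamma(q,\alpha)|=4$ forces $t=d-5$, i.e.\ $4\alpha-1\equiv -5\pmod d$, i.e.\ $d\mid 4(\alpha+1)$. Because $0<4(\alpha+1)\le 4q<4d$, this means $4(\alpha+1)\in\{d,2d,3d\}$, giving
\begin{equation*}
\alpha\in\{\,\tfrac{q-3}{4},\ \tfrac{q-1}{2},\ \tfrac{3q-1}{4}\,\}
\end{equation*}
for the integral values in $[1,q-1]$. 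A short check shows that the interval for $q-1-\alpha$ is the reflection $x\mapsto-x$ of $I$ translated by $-d$, both operations preserving the set of multiples of $d$; hence $|\Gamma(q,\alpha)|=|\Gamma(q,q-1-\alpha)|$, and the root $\tfrac{3q-1}{4}=q-1-\tfrac{q-3}{4}$ is subsumed under $\tfrac{q-3}{4}$, leaving exactly the two cases in the statement.

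Each step is short, so the work is in the bookkeeping. The divisibility $d\mid 4(\alpha+1)$ interacts with $q\bmod 4$: $\tfrac{q-1}{2}$ is integral exactly when $q$ is odd, while $\tfrac{q-3}{4}$ and $\tfrac{3q-1}{4}$ are integral exactly when $q\equiv 3\pmod 4$; in particular for $q$ even one has $\gcd(4,d)=1$, so $d\nmid 4(\alpha+1)$ and there are no exceptions, matching the fact that both listed values are then non-integral. The one boundary to watch is $d=10$ (that is $q=9$), where $5d-10=4d$ and the count-five regime $t\le d-10$ is empty; one checks directly that this causes no trouble. I expect the main obstacle to be precisely this combination: invoking $\alpha+1\equiv 0\pmod 5$ to collapse the nine candidate residues down to the single value $d-5$, and then using the reflection symmetry to discard the third root $\tfrac{3q-1}{4}$.
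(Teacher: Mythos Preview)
Your lattice-point approach via the residue $t=(4\alpha-1)\bmod(q+1)$ is tidier than the paper's, which writes $\Gamma=\{(k-3)(q+1),\dots,k(q+1)\}$, bounds $k$ from the two endpoint inequalities, and sharpens those bounds using $4\alpha-1\equiv 0\pmod 5$. Both routes collapse to the same condition $4(\alpha+1)\equiv 0\pmod{q+1}$, i.e.\ $4(\alpha+1)\in\{q+1,\,2(q+1),\,3(q+1)\}$ in the admissible range. The paper's proof records only the first two of these and so omits $\alpha=\tfrac{3q-1}{4}$; your more systematic count catches it.

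The problem is your last step. The reflection identity $|\Gamma(q,\alpha)|=|\Gamma(q,q-1-\alpha)|$ is correct, but it does the opposite of what you want: it \emph{confirms} that $\tfrac{3q-1}{4}=q-1-\tfrac{q-3}{4}$ is a genuine exception whenever $\tfrac{q-3}{4}$ is, rather than allowing you to discard it. Concretely, for $q=19$ and $\alpha=14=\tfrac{3q-1}{4}$ one finds $\Gamma(19,14)=\{-20,0,20,40\}$, of size four. So the lemma as printed is in fact false; what your argument actually establishes is the corrected version with three exceptional values $\{\tfrac{q-3}{4},\,\tfrac{q-1}{2},\,\tfrac{3q-1}{4}\}\cap\mathbb{Z}$. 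This does not disturb Theorem~1.1, since Lemma~2.6 is only invoked in the proof for $q=2^{4k+2}$, where none of the three values is integral, and in the converse direction only the single exception $\tfrac{q-1}{2}$ is needed to force $q$ even; but strictly speaking Lemma~2.6 should also carry a third clause for $\alpha=\tfrac{3q-1}{4}$.
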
 

\begin{proof}
We may assume $\alpha +1 \equiv 0 \pmod{5}.$  Since $\Gamma(q,\alpha)$ in contained in the interval $[4\alpha+4-5q, 4\alpha -1]$ which has length $5(q-1)$ we must have $ 4 \leq | \Gamma(q,\alpha)| \leq 5.$

Suppose  $ 4 =| \Gamma(q,\alpha)|.$  Choose $k$ so $\{(k-3)(q+1), (k-2)(q+1), (k-1)(q+1), k(q+1) \} = \Gamma(q,\alpha).$ Note $q \geq 6$ and $\alpha >0$ force $k \in \{0,1,2,3 \}.$  We have the following inequalities
\begin{equation}
\begin{cases}
4 \alpha -1 \leq k(q+1) +q \\
(k-3)(q+1)-q \leq 4 \alpha +4 -5q.
\end{cases}
\end{equation}
Since $4 \alpha -1, 4\alpha +4 -5q \equiv 0 \pmod{5}$ it follows 
\begin{equation}
\begin{cases}
4 \alpha -1 \leq k(q+1) +q - 4 \\
(k-3)(q+1)-q + 4 \leq 4 \alpha +4 -5q.
\end{cases}
\end{equation}
Since $a \leq q-1$ and $0\leq k \leq 3$, taking the difference of the inequalities in (2.8) reveals $k=0$ which gives $\alpha = \frac{q-3}{4}$ or $k=1$ which gives $\alpha = \frac{q-1}{2}$.
\end{proof}

\begin{lem}
 Assume $q+1 \equiv 0 \pmod{5}$ and $y:=a^{\frac{q+1}{5}} \neq 1$ is a 5th root of unity.  Then for $1 \leq \alpha \leq q-1$ we have
 $$\Lambda(q, \alpha, a) = \begin{cases}
-a^{-\frac{1}{5}(\alpha+1)}(y^{-1}+1+y+y^2) &\mbox{ if } \alpha=\frac{q-1}{2}, \hspace{2mm} \alpha \in \mathbb{Z}\\
-a^{-\frac{1}{5}(\alpha+1)}(1+y+y^2+y^3) &\mbox{ if } \alpha=\frac{q-3}{4}, \hspace{2mm} \alpha \in \mathbb{Z}\\
0 &\mbox{ otherwise. }
\end{cases}$$
\begin{proof}
We may assume $\alpha +1 \equiv 0 \pmod{5}.$  First suppose $\alpha \ne \frac{q-3}{4}, \frac{q-1}{2}.$ Let $K$ be a set of five consecutive integers such that $K(q+1) = \Gamma(q,\alpha).$  Now we have
\begin{equation} 
\begin{split}
 \Lambda(q,\alpha,a) &= \sum_{-\alpha -1 +5(i-j) \in \Gamma(q,\alpha)} \binom{\alpha}{i} \binom{q-1-\alpha}{j} a^{-i-jq}\\   
&= \sum_{k\in K} \sum_{-\alpha-1+5(i-j)=k(q+1)}\binom{\alpha}{i} \binom{q-1-\alpha}{j} a^{-i+j}\\
&= \sum_{k\in K} a^{-\frac{1}{5}[\alpha+1+k(q+1)]} \sum_{i-j=\frac{1}{5}[\alpha+1+k(q+1)]} \binom{\alpha}{\alpha - i} \binom{q-1-\alpha}{j}\\
&= a^{-\frac{1}{5}(\alpha+1)} \sum_{k \in K}y^{-k} \sum_{\alpha-i+j=\frac{1}{5}[4 \alpha -1 - k(q+1)]} \binom{\alpha}{\alpha-i} \binom{q-1-\alpha}{j}\\
&= a^{-\frac{1}{5}(\alpha+1)} \sum_{k \in K}y^{-k} \binom{q-1}{\frac{1}{5}[4 \alpha - 1 - k(q+1)]}\\
&= -a^{-\frac{1}{5}(\alpha+1)} \sum_{k \in K}y^{-k}=0.
\end{split}
\end{equation}

Now suppose $\alpha= \frac{q-1}{2}.$  By the above computation and the previous lemma we have $K =\{-2,-1,0,1\}$, thus
\begin{equation}
\Lambda(q,\alpha,a)= -a^{-\frac{1}{5}(\alpha+1)}(y^{-1}+1+y+y^2).
\end{equation}
Similarly if $\alpha= \frac{q-3}{4},$ we have
\begin{equation}
\Lambda(q,\alpha,a) =-a^{-\frac{1}{5}(\alpha+1)}(1+y+y^2+y^3).
\end{equation}
\end{proof}
\end{lem}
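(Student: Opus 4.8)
The plan is to start from the defining sum (2.5) and push it through three reductions: restricting the relevant $\alpha$, linearizing the power of $a$, and collapsing the inner double sum by Vandermonde.

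First I would invoke Remark 2.2 to discard every $\alpha$ with $\alpha+1\not\equiv 0\pmod 5$: for such $\alpha$ the index set of $\Lambda(q,\alpha,a)$ is empty, so $\Lambda=0$ and these fall under the ``otherwise'' case. Hence assume $\alpha+1\equiv 0\pmod 5$. The key structural input is that $y=a^{\frac{q+1}{5}}$ is a nontrivial fifth root of unity; since $5$ is prime, $y$ is in fact primitive, and $a^{q+1}=y^5=1$. This lets me replace $a^q$ by $a^{-1}$ throughout, so that $a^{-i-jq}=a^{-(i-j)}$, turning the two-variable exponent into a function of $i-j$ alone.

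Next I would partition the index set according to which multiple of $q+1$ the quantity $-\alpha-1+5(i-j)$ equals. Writing $K$ for the set of integers $k$ with $k(q+1)\in\Gamma(q,\alpha)$, each $k$ forces $i-j=\frac{1}{5}(\alpha+1+k(q+1))$, so the $a$-power factors out as $a^{-\frac{1}{5}(\alpha+1)}y^{-k}$. The inner sum over $(i,j)$ with this fixed difference is $\sum\binom{\alpha}{i}\binom{q-1-\alpha}{j}$; after the substitution $i\mapsto\alpha-i$ this becomes a Vandermonde convolution summing to $\binom{q-1}{m_k}$ with $m_k=\frac{1}{5}(4\alpha-1-k(q+1))$, the bound $0\le m_k\le q-1$ being guaranteed by $k(q+1)\in\Gamma(q,\alpha)$. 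The outcome is $\Lambda(q,\alpha,a)=a^{-\frac{1}{5}(\alpha+1)}\sum_{k\in K}y^{-k}\binom{q-1}{m_k}$.

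The step I expect to carry the real content is evaluating $\binom{q-1}{m_k}$ in characteristic $p$ and checking it is the \emph{same} constant for every $k\in K$. Here I would use the identity $(1+x)^{q-1}=(1+x^q)/(1+x)$, valid since $(1+x)^q=1+x^q$, which gives $\binom{q-1}{m}\equiv(-1)^m\pmod p$ for $0\le m\le q-1$. A parity count then fixes the sign: when $q$ is odd, $q+1$ is even while $4\alpha-1$ is odd, so $5m_k$ is odd and $m_k$ is odd for every $k$, whence $\binom{q-1}{m_k}=-1$; when $q$ is even the coefficient is $1=-1$ in $\mathbb{F}_2$. Either way the binomial contributes the uniform factor $-1$, so $\Lambda(q,\alpha,a)=-a^{-\frac{1}{5}(\alpha+1)}\sum_{k\in K}y^{-k}$.

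Finally I would read off $K$ from the counting Lemma 2.4. For $\alpha\neq\frac{q-1}{2},\frac{q-3}{4}$ the set $\Gamma(q,\alpha)$ consists of five consecutive multiples of $q+1$, so $K$ is five consecutive integers; since $y$ is a primitive fifth root of unity, $\{y^{-k}:k\in K\}$ runs over all fifth roots of unity and sums to $0$, giving $\Lambda=0$. In the two exceptional cases Lemma 2.4's parametrization pins down $K$ exactly: $\alpha=\frac{q-3}{4}$ corresponds to $K=\{-3,-2,-1,0\}$, yielding $\sum_{k\in K}y^{-k}=1+y+y^2+y^3$, and $\alpha=\frac{q-1}{2}$ corresponds to $K=\{-2,-1,0,1\}$, yielding $\sum_{k\in K}y^{-k}=y^{-1}+1+y+y^2$. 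Substituting these into the relation $\Lambda=-a^{-\frac{1}{5}(\alpha+1)}\sum_{k\in K}y^{-k}$ produces the three stated cases. The only delicate points are the uniformity of the sign in the binomial evaluation and correctly identifying the four-element sets $K$ in the boundary cases.
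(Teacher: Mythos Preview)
Your proposal is correct and follows essentially the same route as the paper: reduce to $\alpha+1\equiv 0\pmod 5$, use $a^{q+1}=1$ to collapse $a^{-i-jq}$ to $a^{-(i-j)}$, split by $k\in K$, apply Vandermonde to obtain $\binom{q-1}{m_k}$, and then read off $K$ from the structural lemma on $\Gamma(q,\alpha)$. The only difference is that you spell out the evaluation $\binom{q-1}{m_k}\equiv -1$ via $(1+x)^{q-1}$ and the parity of $m_k$, whereas the paper passes over this step silently.
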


%%%%%%%%%%%%%%%%%%%%%%%%%%%%%%%%%%%%%%%%%%%%%%%%%%%%%%
\section{Proof of the Theorem}%%%%%%%%%%%%%%%%%%%%%%%%%%%%%%%%%%%%%%%%
%%%%%%%%%%%%%%%%%%%%%%%%%%%%%%%%%%%%%%%%%%%%%%%%%%%%%%

\begin{proof}(Theorem1.1)\\
$(\Leftarrow)$  Cases (ii)-(vii) are easily verified by a computer.  Assume (i), that is $q=2^{4k+2}$ and $ a^{ \frac{q+1}{5}} \neq 1$ is a fifth root of unity.  Lemma 2.6 gives $\Lambda(q,\alpha,a)=0$ for each $0 \leq \alpha \leq q-1$ and $0$ is the only root of $f$, so $f$ is a PP by (2.6)\\
 \\
$(\Rightarrow)$  Assume $f$ is a PP.  By Lemma (2.2) we have $q+1 \equiv 0 \pmod{5}.$  Let $y:=a^{\frac{q+1}{5}}.$  If $y \ne 1$ is a fifth root of unity then Lemma 2.6 implies $q$ must be even.  Thus $q=2^{4k+2}$ and we have case (i).

Now suppose $1+y+y^2+y^3+y^4 \neq 0$.  The sum in the RHS of Lemma 2.4 is a polynomial in $v(=y^{-1})$ and can be easily computed for small values of $\alpha$ with the help of a computer algebra system.  For a few values of $\alpha$ we find \begin{equation}
\Lambda(q,\alpha,a) = (-a)^\frac{\alpha+1}{5}v(1+v+v^2+v^3+v^4) 
\begin{cases}
5^{-4}g_4 (v) &\mbox{ if } \alpha = 4, q\geq 24\\
5^{-10}g_9 (v) &\mbox{ if } \alpha = 9, q\geq 44\\
5^{-16}g_{14} (v) &\mbox{ if } \alpha = 14, q\geq 64\\
5^{-28}g_{24} (v) &\mbox{ if } \alpha = 24, q\geq 104.\\
\end{cases}
\end{equation} The polynomials $g_\alpha$ are given in the appendix. Write $R(p_1,p_2)$ for the resultant of polynomials $p_1, p_2.$  Then
\begin{equation}
GCD(R(g_4,g_9),R(g_4,g_{14}))=2^{15}3^{3}5^{197}.
\end{equation}
Thus if $q\geq 64$ we must have $p \hspace{1mm} (=char \mathbb{F}_{q^2}) \in \{2,3\}.$  Since $q+1 \equiv 0 \pmod{5}$ there are only a few prime powers $q<64$ with $p\hspace{1mm} (=char \mathbb{F}_{q^2})\neq 2,3.$
\begin{itemize}
\item When $q=19$, a computer search results in case (iii) 
\item When $q=29$, a computer search results in case (iv) 
\item When $q=49$, a computer search results in case (v) 
\item When $q=59$, a computer search results in case (vi)
\end{itemize}

When $p=2$ we have $GCD(g_4,g_{24})=x$ thus $q<104.$  Since $q+1 \equiv 0 \pmod{5}$ and $q>4$ we only need to consider $q=64.$  A computer search results in case (vii).

When $p=3$  we have $GCD(g_4,g_9)=1$ thus $q<44.$  Again since $q+1 \equiv 0 \pmod{5}$ we only need to consider $q=9.$  A computer search results in case (ii).
\end{proof}
Theorem 1.2 is proved using a similar method which leads us to the following conjecture.
\begin{conj}
Let $ r>2$ be a fixed prime.  If both $ (q+1) \equiv 0 \pmod{r}$ and $a^{\frac{q+1}{r}}$ is not an $r-th$ root of unity; we conjecture there are only finitely many values $(q,a)$ for which $f=ax+x^{r(q-1)+1}\in \mathbb{F}_{q^2}^*[x]$ is a permutation polynomial of $\mathbb{F}_{q^2}.$
\end{conj}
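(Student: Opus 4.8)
The plan is to run the machinery behind Theorems 1.1 and 1.2 in full generality and then isolate the structural features that force finiteness. By Hermite's criterion and the computation leading to (2.6), $f$ is a PP exactly when $0$ is its only root and $\Lambda(q,\alpha,a)=0$ for all $0\le\alpha\le q-1$. The $r$-analog of Lemma 2.2 forces $q\equiv-1\pmod r$ (already assumed in the conjecture), and the analog of Remark 2.3 restricts attention to $\alpha\equiv-1\pmod r$. First I would establish the $r$-analog of Lemma 2.4: for such $\alpha$ and all $q\ge c(r)\alpha+O_r(1)$ the interval $[\,(r-1)\alpha+(r-1)-rq,\ (r-1)\alpha-1\,]$ contains exactly $r$ consecutive multiples of $q+1$, so that
\[
\Lambda(q,\alpha,a)=(-a)^{\frac{\alpha+1}{r}q}\,G_\alpha(v),\qquad v:=a^{-\frac{q+1}{r}},
\]
where $G_\alpha\in\mathbb{Q}[v]$ depends only on $r$ and $\alpha$, not on $q$. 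As in (3.1), one expects $G_\alpha$ to carry the common factor $v\bigl(1+v+\cdots+v^{r-1}\bigr)$ and a power of $r^{-1}$, leaving an integer polynomial $g_\alpha$; since the hypothesis of the conjecture says $y=a^{(q+1)/r}$ is not an $r$-th root of unity, $v$ is not an $r$-th root of unity, the factor $v(1+v+\cdots+v^{r-1})$ is a unit, and the PP condition collapses to $g_\alpha(v)=0$ for every admissible $\alpha$.

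Next I would convert the simultaneous vanishing of the $g_\alpha$ into a bound on the characteristic. Fix a finite list $\alpha_1<\cdots<\alpha_m$, all $\equiv-1\pmod r$, and take $q$ larger than the largest applicability threshold; then a PP forces $v\in\overline{\mathbb{F}_p}$ to be a common zero of $g_{\alpha_1},\dots,g_{\alpha_m}$. If these polynomials have no common root over $\overline{\mathbb{Q}}$ — equivalently they generate the unit ideal in $\mathbb{Q}[v]$, a condition read off from resultants such as $R(g_{\alpha_1},g_{\alpha_2})$ — then the ideal $(g_{\alpha_1},\dots,g_{\alpha_m})\cap\mathbb{Z}$ contains a nonzero integer $N=N(r)$, and a common root in characteristic $p$ forces $p\mid N$. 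Because $q\equiv-1\pmod r$ already excludes $p=r$, only finitely many characteristics survive; this is exactly the role played by $\gcd\bigl(R(g_4,g_9),R(g_4,g_{14})\bigr)=2^{15}3^{3}5^{197}$ in the proof of Theorem 1.1.

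For each of the finitely many surviving primes $p$ I would then bound $q$. Reducing the $g_\alpha$ modulo $p$, and enlarging the list of exponents if necessary, one shows the reductions have no common zero in $\overline{\mathbb{F}_p}$ beyond $v=0$ and the excluded roots of unity; since $g_\alpha$ is valid only for $q\ge c(r)\alpha+O_r(1)$, a trivial common zero set forces an explicit upper bound on $q$, exactly as $\gcd(g_4,g_{24})=x$ gives $q<104$ for $p=2$ and $\gcd(g_4,g_9)=1$ gives $q<44$ for $p=3$. Finitely many $q$ per $p$, together with at most $q^2-1$ choices of $a$ per $q$, yields finitely many pairs $(q,a)$.

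The essential obstacle — and the reason this remains a conjecture — is uniformity in $r$. Everything above is conditional on the assertion that for each prime $r$ one can find finitely many exponents $\alpha_i$ whose polynomials $g_{\alpha_i}$ have trivial common zero set over $\overline{\mathbb{Q}}$ and, modulo each bad prime, trivial common zero set apart from roots of unity. For $r\in\{3,5,7\}$ this is verified by a finite resultant/GCD computation, but no argument valid for all $r$ is known, chiefly because the family $\{g_\alpha\}$ has no convenient closed form: its coefficients are alternating sums of products of binomial coefficients $\binom{i+\frac{(r-1)\alpha-1+l}{r}}{\alpha}$, and controlling their simultaneous zeros as $r$ grows is the crux. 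A natural line of attack is to find a generating function or recurrence for the $g_\alpha$ and prove their common zero locus is supported on $r$-th roots of unity; alternatively one might seek a conceptual reformulation through the factorization $f=xh(x^{q-1})$ with $h(x)=a+x^r$ and the action of $h$ on the $(q+1)$-st roots of unity, in the spirit of \cite{Zieve}, which could make the finiteness transparent without case-by-case resultant bounds.
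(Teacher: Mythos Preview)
The statement is a \emph{conjecture} in the paper, not a theorem; the paper offers no proof, only the remark that the method behind Theorems 1.1 and 1.2 ``leads us to'' it. Your proposal is accordingly not a proof either, and you say so explicitly: the entire argument is conditional on the existence, for each prime $r$, of a finite list of exponents $\alpha_i$ whose polynomials $g_{\alpha_i}$ generate the unit ideal in $\mathbb{Q}[v]$ and remain without nontrivial common zeros modulo each surviving prime. That is precisely the unproved step, and you correctly flag it as the obstruction.

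What you have written is an accurate and well-organized abstraction of the paper's method: the $r$-analogues of Lemmas 2.2--2.4, the factorization $G_\alpha(v)=v(1+v+\cdots+v^{r-1})\cdot r^{-k}g_\alpha(v)$, the reduction via resultants to finitely many characteristics, and the per-characteristic bound on $q$ by enlarging the exponent list. This is exactly the template the paper executes for $r=5$ (and, by assertion, $r=7$). So your write-up is a faithful generalization of the paper's approach together with a clear statement of why it does not yet constitute a proof; there is nothing to correct, but also nothing here that goes beyond what the paper already implicitly contains. If you want to make genuine progress on the conjecture itself, the two directions you mention at the end---a recurrence or generating function for the $g_\alpha$, or a reformulation via the action of $h(x)=a+x^r$ on $\mu_{q+1}$ in the style of Zieve---are indeed the natural places to push.
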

%%%%%%%%%%%%%%%%%%%%%%%%%%%%%%%%%%%%%%%%%%%%%%%%%%%%%
\section{Appendix}
\begin{equation*} 
\begin{split} 
g_4(x) &=g_4(x)=44+75 x+115 x^2+165 x^3-2899 x^4-1152 x^5-1465 x^6-1825 x^7\\
&-2235 x^8+25427 x^9+4122 x^{10}+4805 x^{11}+5555 x^{12}+6375 x^{13}-58357 x^{14}\\
&-3514 x^{15}-3915 x^{16}-4345 x^{17}-4805 x^{18}+38454 x^{19}.
\end{split}
\end{equation*}
\begin{equation*} 
\begin{split}
g_9(x) &=35464+47564 x+41954 x^2-3211 x^3-121771 x^4+3307018 x^5+7237538 x^6\\
&+13764443 x^7+24034088 x^8-399905587 x^9-247096418 x^{10}-374180158 x^{11}\\
&-550629013 x^{12}-791033308 x^{13}+10166063897 x^{14}+3488769646 x^{15}\\
&+4732145066 x^{16}+6330108851 x^{17}+8362735316 x^{18}-90589540129 x^{19}\\
&-20005926320 x^{20}-25545256270 x^{21}-32342528845 x^{22}-40628485270 x^{23}\\
&+389225321705 x^{24}+56878393066 x^{25}+69794856416 x^{26}+85151720951 x^{27}\\
&+103326084566 x^{28}-901674492499 x^{29}-84876624338 x^{30}-101203850008 x^{31}\\
&-120170621113 x^{32}-142129134058 x^{33}+1152208354517 x^{34}+63621134698 x^{35}\\
&+74215593188 x^{36}+86304925343 x^{37}+100064515838 x^{38}-764098747192 x^{39}\\
&-18879570941 x^{40}-21644171461 x^{41}-24754360696 x^{42}-28246292086 x^{43}\\
&+205243145184 x^{44}.
\end{split}
\end{equation*}
\begin{equation*} 
\begin{split}
g_{14}(x) &=35781192+45103176 x+34642700 x^2-11075651 x^3-104451417 x^4+3431018744 x^5\\&+5425819552 x^6+5815099340 x^7+1030592053 x^8-15702529689 x^9+367570246196 x^{10}\\&+915262570648 x^{11}+1948805856320 x^{12}+3779801961517 x^{13}-62553432822181 x^{14}\\&-47505978162672 x^{15}-79121737743896 x^{16}-127716748824160 x^{17}-200786722216499 x^{18}\\&+2746238364681602 x^{19}+1263694841543105 x^{20}+1868009627371040 x^{21}\\&+2718322826664950 x^{22}+3900065855839735 x^{23}-46407807995168830 x^{24}\\&-15130762640643679 x^{25}-20922156001611712 x^{26}-28643147196518350 x^{27}\\&-38852104359262613 x^{28}+415138746369911354 x^{29}+101011800565346642 x^{30}\\&+133546181274464111 x^{31}+175316634655627970 x^{32}+228624694439538979 x^{33}\\&-2240965131247477702 x^{34}-414819831279200662 x^{35}-530625761486025481 x^{36}\\&-675157922777947390 x^{37}-854715990669328349 x^{38}+7803518358751564382 x^{39}\\&+1100711546227581974 x^{40}+1372388078602423607 x^{41}+1703988861163039670 x^{42}\\&+2107260628357829383 x^{43}-18124791008882124634 x^{44}-1923722972889132730 x^{45}\\&-2349566656784986915 x^{46}-2860023585150150200 x^{47}-3470096113600539985 x^{48}\\&+28364072170221684830 x^{49}+2200015572073495862 x^{50}+2641585104385038536 x^{51}\\&+3162990481363018525 x^{52}+3777155092039529639 x^{53}-29542409091657957562 x^{54}\\&-1584053771764262386 x^{55}-1874825156457447288 x^{56}-2213821365876774435 x^{57}\\&-2608223671181041657 x^{58}+19628014114205307016 x^{59}+651404631123094716 x^{60}\\&+761528771717834208 x^{61}+888518897148324570 x^{62}+1034700084660340082 x^{63}\\&-7525847208868343576 x^{64}-116633198562800052 x^{65}-134898641118903336 x^{66}\\&-155761841308153260 x^{67}-179556106888260384 x^{68}+1266995051549992032 x^{69}
\end{split}
\end{equation*}

\begin{equation*} 
\begin{split}
g_{24}(x) &=44199864566676+53277891868890 x+36951353492365 x^2-18447292166275 x^3\\&-115981817761656 x^4+5683172059809972 x^5+7671280571862570 x^6\\&+6327550580716885 x^7-1472857823809435 x^8-18209145388579992 x^9\\&+496676536774208388 x^{10}+771501576163548690 x^{11}+774677809879397185 x^{12}\\&+37151453118769745 x^{13}-2080007375935924008 x^{14}+44904298634276465124 x^{15}\\&+85192441936752637650 x^{16}+111451370786904498265 x^{17}+47809116140932255865 x^{18}\\&-289357227498865856904 x^{19}+6805964762978995053180 x^{20}+19701780140886237080850 x^{21}\\&+47920556078520020855875 x^{22}+105337499516914088443775 x^{23}\\&-1763163188957551294558680 x^{24}-1682149911521612848440924 x^{25}\\&-3144091106960055505926510 x^{26}-5682560762926174819291685 x^{27}\\&-9983113995337864503302425 x^{28}+145844209029690653966024044 x^{29}\\&+90577048940932553337666992 x^{30}+149016337368570701236013970 x^{31}\\&+240987977179133628538814635 x^{32}+383707733869425939573259415 x^{33}\\&-5054637186099365312920130012 x^{34}-2383073542636350764433826072 x^{35}\\&-3646247878744859791470683310 x^{36}-5516927650609039918496642165 x^{37}\\&-8260762058311010597210260105 x^{38}+99947418327478671197343833527 x^{39}\\&+37844995423440695266939743959 x^{40}+55100153059398963201126006300 x^{41}\\&+79573062038413273868028870415 x^{42}+114034864385838660589245973115 x^{43}\\&-1283855920027521585601121843789 x^{44}-401631430484182503011941004665 x^{45}\\&-563418559621522038915547815300 x^{46}-785422361095522275273631000625 x^{47}\\&-1088343656606667747839725560325 x^{48}+11515350760355635501320142255915 x^{49}\\&+3023936138319755361442261262591 x^{50}+4119412652650088403766862240940 x^{51}\\&+5583398973821401437427782781615 x^{52}+7530907539377792043231905759675 x^{53}\\&-75471967365836479567646155794821 x^{54}-16787688185166611656225020168433 x^{55}\\&-22327203688191441773802872732580 x^{56}-29570337359172614836200572834465 x^{57}\\&-39004838361833906826391115026885 x^{58}+372587302842602423374479932324863 x^{59}\\&+70520166819123145016529674257763 x^{60}+91921490099273043258578272890040 x^{61}\\&+119393417476195244286408801766635 x^{62}+154543277170360802146843655221595 x^{63}\\&-1414460450943061212979844779136033 x^{64}-228058586275344486040909532074621 x^{65}\\&-292198163240869437036528542283800 x^{66}-373236999859930139181490117490085 x^{67}\\&-475340219041052709214076298517285 x^{68}+4186641921387463113636090427865791 x^{69}\\&+573982399597440241104412076252915 x^{70}+724508808520889474869872176906800 x^{71}\\&+912087128194582180606507235839875 x^{72}+1145264181936242472433040520458075 x^{73}\\&-9742873878315091411250160134457665 x^{74}-1130413280103442753936876665890989 x^{75}\\&-1408264364840845831044139437784160 x^{76}-1750319535256268238132497892063885 x^{77}\end{split}
\end{equation*}
\begin{equation*} 
\begin{split}
&-2170502922925974506143566661090725 x^{78}+17890834639608180012805054289642259 x^{79}\\&+1742739271574108271273793805711627 x^{80}+2145966682431740273469435944163595 x^{81}\\&+2637022907200101153997087624591860 x^{82}+3233886164548639605146540321869515 x^{83}\\&-25897969665493387445366349883836597 x^{84}-2092770412365513238435254665605237 x^{85}\\&-2550276778698834519482774414234285x^{86}-3102050153955665578994040465961140 x^{87}\\&-3766364889683287116775343845510605 x^{88}+29374000062448380447889073547561267 x^{89}\\&+1935240033819795981039415092100619 x^{90}+2336265403685016505132971195469975 x^{91}\\&+2815697619541879892767440305985240 x^{92}+3387974339410094717815325956323015 x^{93}\\&-25785924105952569948928004112378849 x^{94}-1350431582795782203758136667410805 x^{95}\\&-1616445075520877395405741712969225 x^{96}-1931945507456689143942634660617000 x^{97}\\&-2305611169359156003531542005529025 x^{98}+17156418696304763600550408562151055 x^{99}\\&+687379082077441716165690906155771 x^{100}+816415958977468475853193747835215 x^{101}\\&+968348850584524348447504741362840 x^{102}+1147010610713807759039043095096625 x^{103}\\&-8358195150530508420416077021700451 x^{104}-240714899339982433449408005104533 x^{105}\\&-283875024040741105674985093276305 x^{106}-334355514938579365555961289992040 x^{107}\\&-393328645903551948398787580245735 x^{108}+2810835580815912962362589234243613 x^{109}\\&+51827832972624672481156065143283 x^{110}+60721533591007894432573966947615 x^{111}\\&+71059972468668725537659588986360 x^{112}+83064891280671737260332843826245 x^{113}\\&-582904099186926848986624808653503 x^{114}-5170193326947891817908520747581 x^{115}\\&-6020861165457984420843719642625 x^{116}-7004114740388167029285613941960 x^{117}\\&-8139491600243587212407368112835 x^{118}+56154582678607837122596101351251 x^{119}\end{split}
\end{equation*}

\end{document}